\documentclass{article}


\textwidth 6.5in
\textheight 9.0in
 \voffset=-0.8in 
\hoffset=-0.84in 

\usepackage{multicol}
\usepackage{amssymb,latexsym,amsmath,amsthm,amsfonts} 
\usepackage{graphicx}
\usepackage{tikz}
\usepackage{enumitem}
\usepackage{verbatim}
\usepackage{color}
\usepackage{xcolor}
\usepackage{mathrsfs}
\usepackage[normalem]{ulem}
\usepackage{hyperref}
\usepackage{fancyvrb}
\usepackage{fancyhdr}
\usepackage{listings}
\usepackage{centernot}
\usepackage{mathtools}


\newtheorem{theorem}{Theorem}

\theoremstyle{definition}

\newtheorem{remark}[theorem]{Remark}
\newtheorem{claim}{Claim}

\usepackage{subfiles}
\usepackage{float}

\newcommand{\A}{\mathcal{A}}

\newcommand{\I}{\mathcal{I}}
\newcommand{\s}{\mathbf{s}}

\newcommand{\vi}{v^i}
\newcommand{\ir}{\mathcal{I}^{(r)}}

\pgfdeclarelayer{background}
\pgfdeclarelayer{foreground}
\pgfsetlayers{background,main,foreground}


\begin{document}

\title{EKR-Type Theorems for Pendant Graph Constructions}

\author{
Michael Carrion \footnotemark[1]\
\and
Melissa M. Fuentes \footnotemark[2]\ 
\and
Zaphenath Joseph \footnotemark[3]\
\and
Alexander Nappo \footnotemark[4]
}
\date{}
\maketitle

\footnotetext[1]{
Department of Mathematics \& Statistics, 
Villanova University, 
Villanova, PA, USA, 
\texttt{mcarri04@villanova.edu}
}

\footnotetext[2]{
Department of Mathematics \& Statistics, 
Villanova University, 
Villanova, PA, USA, 
\texttt{melissa.fuentes@villanova.edu}
}

\footnotetext[3]{
Department of Mathematics \& Statistics, 
Villanova University, 
Villanova, PA, USA, 
\texttt{zjoseph@villanova.edu}
}

\footnotetext[4]{
Department of Mathematics \& Statistics, 
Villanova University, 
Villanova, PA, USA, 
\texttt{anappo01@villanova.edu}
}


\begin{abstract}
The classical Erd\H{o}s--Ko--Rado (EKR) theorem characterizes the maximum size of intersecting families of $r$-element subsets of an $n$-element set. We study EKR-type questions for independent $r$-sets in \emph{pendant} graph constructions, obtained by attaching to each base vertex a clique of prescribed size.

Our contributions are threefold. We give an alternate and purely combinatorial proof (via shifting and shadows) that the pendant complete graph $K_n^{*}$ is $r$-EKR for $n \ge 2r$, and strictly so for $n>2r$, recovering a result of De Silva, Dionne, Dunkelberg, and Harris. We extend this to \emph{generalized pendant complete graphs}, where every base vertex in the clique supports a clique of arbitrary size, proving that that generalized pendant complete graphs are $r$-EKR whenever $n \ge 2r$. For pendant paths $P_n^{*}$, we provide elementary constructions showing that $P_n^{*}$ is not $(n-k)$-EKR when $n \ge 3k+2$ for $k\ge 2$, not $(n-1)$-EKR for $n\ge 6$, and not $n$-EKR for $n\ge 4$. These results fit naturally into the Holroyd--Talbot perspective relating $r$-EKR thresholds to independence parameters and supply tools for further pendant constructions.
\end{abstract}


\section{Introduction}

The classical Erd\H{o}s--Ko--Rado (EKR) theorem is a cornerstone of extremal combinatorics, describing the structure of intersecting families of uniform sets. In graph-theoretic extensions, one studies independent $r$-sets of a graph $G$ and asks when stars are extremal. Following Holroyd, Spencer, and Talbot~\cite{Holroyd2005}, we say that $G$ has the \emph{$r$-EKR property} if every largest intersecting subfamily of independent $r$-sets is an $r$-star (that is, all members contain a common vertex).\\

\noindent Throughout, graphs are finite and simple. For a graph $G$ and $r\in\mathbb{N}$, let $\ir(G)$ denote the family of independent $r$-subsets of $V(G)$. A subfamily $\A\subseteq\ir(G)$ is \emph{intersecting} if every two members have nonempty intersection. For $v\in V(G)$, we write $\ir_v(G):=\{A\in\ir(G):v\in A\}$ and refer to this as the \emph{$r$-star centered at $v$}. We also write $[n]=\{1,\dots,n\}$. \\

\noindent In this work we focus on \emph{pendant} constructions. Given a graph $G$ with vertex set $V(G)=\{v_1,\dots,v_n\}$, the \emph{pendant graph} $G^*$ is obtained by adjoining a new pendant vertex $p_i$ to each $v_i$:
\[
V(G^*) \;=\; V(G)\;\sqcup\;\{p_1,\dots,p_n\},
\qquad
E(G^*) \;=\; E(G)\;\sqcup\;\{\,v_ip_i:\;i\in [n]\,\}.
\]
We call $\{p_1,\dots,p_n\}$ the \emph{pendant vertices} of $G^*$. More generally, for a sequence $\s=(s_1,\dots,s_n)$ of positive integers, the \emph{generalized pendant graph} $G^{\s}$ is formed by attaching to $v_i$ a clique $K_{s_i}$ for each $i \in [n]$ with vertex set $V(K_{s_i})=\{\vi_1,\dots,\vi_{s_i}\}$, each adjacent to $v_i$ and to no other base vertex:
\[
V(G^{\s}) \;=\; V(G)\;\sqcup\; \bigl(\bigsqcup_{i=1}^n V(K_{s_i})\bigr),\qquad
E(G^{\s}) \;=\; E(G)\;\sqcup\;\bigl(\bigsqcup_{i=1}^n E(K_{s_i})\bigr)\;\sqcup\;\{\,v_i\vi_j:\; i\in[n],\,1\le j\le s_i\,\}.
\]
Let $\mathbf{1}=(1, \ldots, 1)$ be of length $n$. For $m\ge 1$ let $m\mathbf{1}=(m,\dots,m)$ and write $G^m:=G^{m\mathbf{1}}$; in particular $G^*=G^{\mathbf{1}}$. \\

\noindent Two benchmark families already illustrate the range of behaviors we study. For the pendant complete graph $K_n^*$, De~Silva, Dionne, Dunkelberg, and Harris proved that $K_n^*$ is $r$-EKR when $n\ge 2r$, and strictly so when $n>2r$~\cite[Theorem ~4]{DeSilvaDDH2023}. We provide an alternative proof via shifting and shadow arguments. For pendant paths $P_n^*$, the largest $r$-stars are centered at $p_2$ and $p_{n-1}$ for all $1\le r\le n$, and $P_n^*$ is not $n$-EKR for $n\ge 4$~\cite[Theorem 5 and Lemma 9]{DeSilvaDDH2023}. Building on these foundations, we extend the $K_n^*$ analysis to generalized pendant complete graphs $K_n^{\s}$ and also give explicit non-EKR constructions for $P_n^*$ at high values of $r$. 

\paragraph{Organization.}
Section~\ref{sec3} gives a short, purely combinatorial proof that $K_n^*$ is $r$-EKR for $n\ge 2r$ (with strictness for $n>2r$). Section~\ref{sec4} extends this to generalized pendant complete graphs $K_n^{\s}$, proving $r$-EKR under the same threshold. Section~\ref{sec5} turns to pendant paths $P_n^*$ and presents elementary constructions showing that $P_n^{*}$ is not $(n-k)$-EKR when $n \ge 3k+2$ for $k\ge 2$, not $(n-1)$-EKR for $n\ge 6$, and not $n$-EKR for $n\ge 4$. Section~\ref{sec6} outlines directions for further work, including pendant cycles and powers of paths or cycles.

\section{Complete Pendant Graphs}
\label{sec3}

The EKR property for the pendant complete graph $K_n^*$ was established by De~Silva, Dionne, Dunkelberg, and Harris~\cite{DeSilvaDDH2023}. Here we present an alternative proof using classical shifting and shadow techniques, inspired by Katona's shadow bound~\cite{Katona1964} and a short proof of classical EKR due to Frankl and F\"uredi~\cite{FranklFuredi}. The approach is purely combinatorial. \\

\noindent Given a family $\A$ of $r$-element sets and an integer $s$, the \emph{$s$-shadow} of $\A$ is
\[
\partial_s \A \;=\; \{\,S:\ |S|=s \text{ and } S\subseteq A \text{ for some } A\in\A\,\}.
\]
Shadows can be used to bound intersecting families via the following result of Katona.

\begin{theorem}[Katona, ~\cite{Katona1964}]\label{thm:Katona}
Let $a,b$ be integers with $0\le b\le a$, and let $\A$ be a family of $a$-element sets such that
$|A\cap A'|\ge b$ for all $A,A'\in\A$. Then
\[
|\A|\ \le\ |\partial_{\,a-b}\A|\,.
\]
\end{theorem}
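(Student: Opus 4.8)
The plan is to reduce, via \emph{shifting}, to a left-compressed family, and then to induct on the number $N=\bigl|\bigcup_{A\in\A}A\bigr|$ of elements appearing in $\A$. Recall the shift $S_{ij}$ for $1\le i<j$: $S_{ij}(A)=(A\setminus\{j\})\cup\{i\}$ if $j\in A$ and $i\notin A$, and $S_{ij}(A)=A$ otherwise; for a family, $S_{ij}(\A)$ replaces each $A$ by $S_{ij}(A)$ whenever the latter is not already present. I would rely on the three classical facts that $|S_{ij}(\A)|=|\A|$, that the condition $|A\cap A'|\ge b$ for all $A,A'\in\A$ is preserved by $S_{ij}$, and that $\partial_s(S_{ij}(\A))\subseteq S_{ij}(\partial_s\A)$, whence $|\partial_s(S_{ij}(\A))|\le|\partial_s\A|$ for every $s$. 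Iterating the shifts terminates at a left-compressed family, so by these facts we may assume $\A$ is itself left-compressed; its ground set is then an initial segment $\{1,\dots,N\}$ (if it were strictly smaller, the inductive hypothesis on fewer elements would already finish).

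For the induction on $N$ the base cases are immediate: if $b=0$ then $\partial_{a-b}\A=\A$; if $b=a$ then all members of $\A$ coincide, so $|\A|\le1$ and the bound is clear; and if $N\le a$ then $\A$ is empty (for $N<a$) or is $\{\{1,\dots,a\}\}$ at most (for $N=a$), so again $|\A|\le1$. So assume $1\le b<a$ and $N>a$. Split $\A=\A_{\overline{1}}\sqcup\A_1$ according to whether a member contains the element $1$, and set $\A_1^{-}=\{A\setminus\{1\}:A\in\A_1\}$, a family of $(a-1)$-sets. Left-compression yields two structural facts. First, for $A\in\A_{\overline{1}}$ and $j\in A$, applying $S_{1j}$ shows $(A\setminus\{j\})\cup\{1\}\in\A_1$, so $A\setminus\{j\}\in\A_1^{-}$; letting $j$ range over $A$ gives $\partial_{a-1}\A_{\overline{1}}\subseteq\A_1^{-}$, and hence $\partial_t\A_{\overline{1}}\subseteq\partial_t\A_1^{-}$ for every $t\le a-1$. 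Second, if $A,A'\in\A_{\overline{1}}$ and $j\in A\cap A'$ (nonempty since $b\ge1$), then $(A\setminus\{j\})\cup\{1\}\in\A$ meets $A'$ in $|A\cap A'|-1$ elements, so the $b$-intersection hypothesis forces $|A\cap A'|\ge b+1$; thus $\A_{\overline{1}}$ is $(b+1)$-intersecting.

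I would then count shadows through the element $1$. Partitioning $\partial_{a-b}\A$ by whether a set contains $1$, and using $\partial_{a-b}\A_{\overline{1}}\subseteq\partial_{a-b}\A_1^{-}$ from the first fact, gives
\[
|\partial_{a-b}\A| \;=\; |\partial_{a-b}\A_1^{-}| \;+\; |\partial_{a-b-1}\A_1^{-}|,
\]
while $|\A|=|\A_{\overline{1}}|+|\A_1^{-}|$. Now $\A_1^{-}$ is a $(b-1)$-intersecting family of $(a-1)$-sets on at most $N-1$ elements, so by induction $|\A_1^{-}|\le|\partial_{(a-1)-(b-1)}\A_1^{-}|=|\partial_{a-b}\A_1^{-}|$; and $\A_{\overline{1}}$ is a $(b+1)$-intersecting family of $a$-sets omitting the element $1$, hence on at most $N-1$ elements, so by induction $|\A_{\overline{1}}|\le|\partial_{a-(b+1)}\A_{\overline{1}}|=|\partial_{a-b-1}\A_{\overline{1}}|\le|\partial_{a-b-1}\A_1^{-}|$, the last inequality again by the first fact. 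Adding the two estimates yields $|\A|\le|\partial_{a-b}\A_1^{-}|+|\partial_{a-b-1}\A_1^{-}|=|\partial_{a-b}\A|$, closing the induction.

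The step I expect to be the main obstacle is setting up the recursion so that the inductive hypothesis genuinely applies to both pieces. Passing from $\A$ to $\A_{\overline{1}}$ does not lower the uniformity $a$, so one must both track the ground-set size (which drops, because the element $1$ is discarded) and---the essential point---notice that $\A_{\overline{1}}$ is $(b+1)$-intersecting, not merely $b$-intersecting. That extra unit of intersection is exactly what makes the argument close, since $\A_{\overline{1}}$ re-enters the inequality at shadow level $a-b-1$, one below the level $a-b$ at which a merely $b$-intersecting family of $a$-sets is controlled; a split that ignores this gain stalls. The only genuinely classical input is that shifting does not enlarge shadows; with that granted, the remainder is elementary counting and invokes neither the Kruskal--Katona theorem nor any generating-function machinery.
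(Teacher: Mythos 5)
Your argument is correct. Note first that the paper does not prove this statement at all: it is quoted as a known result of Katona and used as a black box (via the citation to \cite{Katona1964}), so there is no internal proof to compare against. What you have written is a complete, self-contained proof in the style of Frankl's classical shifting proof of the intersecting-shadow theorem, and every step checks out: the three shifting facts you invoke are standard; left-compression does give $(A\setminus\{j\})\cup\{1\}\in\A$ for $A\in\A_{\overline{1}}$, $j\in A$, which yields both $\partial_t\A_{\overline{1}}\subseteq\partial_t\A_1^{-}$ for $t\le a-1$ and the crucial upgrade of $\A_{\overline{1}}$ from $b$-intersecting to $(b+1)$-intersecting; the partition of $\partial_{a-b}\A$ by membership of the element $1$ gives exactly $|\partial_{a-b}\A_1^{-}|+|\partial_{a-b-1}\A_1^{-}|$; and the two applications of the inductive hypothesis (to the $(b-1)$-intersecting $(a-1)$-uniform family $\A_1^{-}$ and the $(b+1)$-intersecting $a$-uniform family $\A_{\overline{1}}$, both supported on at most $N-1$ elements) combine to close the induction. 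Your diagnosis of the delicate point is also the right one: without the observed gain to $(b+1)$-intersecting, the estimate for $\A_{\overline{1}}$ lands at shadow level $a-b$ rather than $a-b-1$ and the bookkeeping fails. Two cosmetic remarks: you should dispose of the trivial case $\A_1=\emptyset$ (which for a nonempty left-compressed family cannot occur), and the preservation of the $b$-intersecting condition under $S_{ij}$ deserves at least a one-line justification, but neither affects the validity of the proof.
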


\begin{theorem}\label{thm:KnstarEKR}
For all $n \ge 2r$, the pendant complete graph $K_n^*$ is $r$-EKR. Moreover, when $n>2r$, equality holds only for an $r$-star centered at a pendant vertex.
\end{theorem}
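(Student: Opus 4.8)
The plan is to combine a compression step with a decomposition by ``type'', reducing the problem to the classical EKR theorem on the pendant indices plus a cross-intersecting estimate that is controlled by Katona's shadow bound (Theorem~\ref{thm:Katona}). First I would record the structure of $\ir(K_n^*)$: since $\{v_1,\dots,v_n\}$ is a clique, an independent $r$-set contains at most one base vertex, so every $A\in\ir(K_n^*)$ is either \emph{all-pendant}, $P_S:=\{p_i:i\in S\}$ for a unique $S\in\binom{[n]}{r}$, or \emph{base-type}, $V_{i,T}:=\{v_i\}\cup\{p_j:j\in T\}$ for a unique $i\in[n]$ and $T\in\binom{[n]\setminus\{i\}}{r-1}$. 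A short count gives $|\ir_{p_k}(K_n^*)|=\binom{n-1}{r-1}+(n-1)\binom{n-2}{r-2}=r\binom{n-1}{r-1}$ and $|\ir_{v_k}(K_n^*)|=\binom{n-1}{r-1}$, so the pendant stars are the only candidates for equality, and the bound to establish is $|\A|\le r\binom{n-1}{r-1}$.

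Next I would compress $\A$. For each $i$ the ``pull-down'' operation replacing $V_{i,T}$ by $P_{T\cup\{i\}}$ (only when the latter is absent) preserves independence, the intersecting property, and $|\A|$; the same holds for the index shifts $p_k\mapsto p_j$ and $v_k\mapsto v_j$ with $j<k$, each performed only when the resulting set is still independent and not already present. Iterating to stability, I may assume $\A$ is compressed; in particular $V_{i,T}\in\A$ forces $P_{T\cup\{i\}}\in\A$. Now set $\mathcal{S}=\{S:P_S\in\A\}$ and $\mathcal{T}_i=\{T:V_{i,T}\in\A\}$. Then $\mathcal{S}$ is an intersecting family of $r$-sets; $\mathcal{T}_i$ and $\mathcal{T}_j$ are cross-intersecting for $i\ne j$; each $\mathcal{T}_i$ is cross-intersecting with $\mathcal{S}$; compressedness gives $T\cup\{i\}\in\mathcal{S}$ for every $T\in\mathcal{T}_i$; and $|\A|=|\mathcal{S}|+\sum_i|\mathcal{T}_i|$.

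By the classical EKR theorem (which I take for granted, or reprove by the same shifting method), $|\mathcal{S}|\le\binom{n-1}{r-1}$ for $n\ge 2r$. The heart of the matter is the estimate $\sum_i|\mathcal{T}_i|\le(r-1)\binom{n-1}{r-1}$. Writing $\mathcal{T}_i^{(r)}=\{T\cup\{i\}:T\in\mathcal{T}_i\}\subseteq\mathcal{S}$, the cross-intersecting conditions make $\bigcup_i\mathcal{T}_i^{(r)}$ an intersecting family of $r$-sets, while $\sum_i|\mathcal{T}_i|$ counts its members with multiplicity at most $r$; the na\"\i ve bound from compressedness alone therefore yields only $r\binom{n-1}{r-1}$, and the extra $\binom{n-1}{r-1}$ must be squeezed out. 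The mechanism is that a ``full'' $r$-set $S$ (one with $V_{i,S\setminus\{i\}}\in\A$ for every $i\in S$, i.e.\ of maximal multiplicity $r$) forces strong intersection constraints: one checks that two full sets must meet in at least three points and that every member of $\mathcal{S}$ must meet a full set in at least two points, so the presence of full sets drives $|\mathcal{S}|$ strictly below $\binom{n-1}{r-1}$ — and Katona's shadow inequality is exactly the tool that quantifies how much these $2$- and $3$-wise intersection requirements shrink the families involved. Balancing these contributions gives $|\A|=|\mathcal{S}|+\sum_i|\mathcal{T}_i|\le r\binom{n-1}{r-1}$. I expect the threshold case $n=2r$ to require a short separate treatment, since several of the auxiliary EKR invocations sit at their boundary there.

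For strictness when $n>2r$: equality forces $|\mathcal{S}|=\binom{n-1}{r-1}$ and $\sum_i|\mathcal{T}_i|=(r-1)\binom{n-1}{r-1}$. The uniqueness clause of EKR (valid for $n>2r$) makes $\mathcal{S}$ a full star $\{S:k\in S\}$; the cross-intersecting conditions then force $\mathcal{T}_k=\emptyset$ and $\mathcal{T}_i=\{T:k\in T,\ i\notin T\}$ for $i\ne k$, so the compressed family is precisely $\ir_{p_k}(K_n^*)$. A standard reverse-compression argument (a pendant star is itself compressed, and any nontrivial decompression of it fails to be intersecting) then transfers this to the original $\A$. The main obstacle, I anticipate, is exactly the inequality $\sum_i|\mathcal{T}_i|\le(r-1)\binom{n-1}{r-1}$: extracting the sharp constant requires genuinely exploiting the cross-intersections between the $\mathcal{T}_i$'s and $\mathcal{S}$, and in particular controlling, for $r\ge 3$, configurations carrying many full supports — which is where Theorem~\ref{thm:Katona} does the real work.
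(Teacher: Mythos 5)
Your setup (compression to a shift-stable family, the split into all-pendant sets $\mathcal{S}$ and base-type sets recorded by the $\mathcal{T}_i$, and classical EKR giving $|\mathcal{S}|\le\binom{n-1}{r-1}$) matches the paper's, and your computation that the pendant star has size $r\binom{n-1}{r-1}$ is correct. The genuine gap is the key inequality $\sum_i|\mathcal{T}_i|\le(r-1)\binom{n-1}{r-1}$, which you assert but do not prove. Your proposed mechanism --- count the members of $\bigcup_i\mathcal{T}_i^{(r)}\subseteq\mathcal{S}$ with multiplicity at most $r$, and then argue that ``full'' sets (those of multiplicity exactly $r$) drive $|\mathcal{S}|$ down far enough to recover the lost $\binom{n-1}{r-1}$ --- is not yet an argument. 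Made precise, it would require a quantitative statement such as: if $F$ members of $\mathcal{S}$ are full, then $|\mathcal{S}|\le\binom{n-1}{r-1}-F/r$, uniformly for all $n\ge 2r$. The facts you cite (two full sets meet in at least three points; every member of $\mathcal{S}$ meets a full set in at least two points) are correct but do not obviously yield that trade-off, particularly near the threshold $n=2r$; ``balancing these contributions'' is exactly where the proof would have to happen, and it is left blank (as you yourself flag).

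The repair is to change the accounting from $r$-sets to $(r-1)$-sets, which is the route the paper takes and for which you already have every ingredient. By your cross-intersecting observations together with the pull-down compression, the family $\A_1':=\bigcup_i\mathcal{T}_i$ of pendant $(r-1)$-sets is itself intersecting: for $i\ne j$ two disjoint $T\in\mathcal{T}_i$, $T'\in\mathcal{T}_j$ give disjoint base-type sets outright, while for $i=j$ pulling one of them down to an all-pendant set produces a disjoint pair in $\A$. Hence $|\A_1'|\le\binom{n-1}{r-2}$, either by quoting EKR for $(r-1)$-sets or, as the paper does, by reproving that bound via Theorem~\ref{thm:Katona} in the Frankl--F\"uredi style (fix $p_1$, complement the sets avoiding it, and show the resulting shadow is disjoint from the sets containing it) --- that is the only place the shadow inequality is actually needed. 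Since each $T\in\A_1'$ extends to a base-type set $V_{i,T}$ for at most $n-r+1$ indices $i$, this gives $\sum_i|\mathcal{T}_i|\le(n-r+1)\binom{n-1}{r-2}=(r-1)\binom{n-1}{r-1}$ with no separate treatment of $n=2r$. Your strictness discussion for $n>2r$ is reasonable in outline but inherits the same gap, since it presupposes the unproved estimate.
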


\begin{proof}
Write $V(K_n^*)=V(K_n)\sqcup P$ with $V(K_n)=\{v_1,\dots,v_n\}$ and $P=\{p_1,\dots,p_n\}$, where $p_i$ is pendant to $v_i$. Let $\A\subseteq \I^{(r)}(K_n^*)$ be intersecting. We prove
\[
|\A|\ \le\ r\binom{n-1}{r-1},
\]
with the indicated equality condition for $n>2r$.

For $i\in[n]$ define the shift
\[
S_i(A)\;=\;
\begin{cases}
(A\setminus\{v_i\})\cup\{p_i\}, &\text{if } v_i\in A \text{ and } (A\setminus\{v_i\})\cup\{p_i\}\notin\A,\\
A,&\text{otherwise}.
\end{cases}
\]
Iterating $S_1,\dots,S_n$ we may assume $\A$ is \emph{shift-stable}, i.e., $S_i(\A)=\A$ for all $i$.

Since $K_n$ is a clique, any independent $r$-set contains at most one base vertex $v_i$. Partition $\A$ into sets
\[
\A_0 \;=\; \{A\in\A:\ A\cap V(K_n)=\emptyset\} \text{ and }
\A_1 \;=\; \{A\in\A:\ |A\cap V(K_n)|=1\}.
\]
The classical EKR Theorem (\cite{EKR}) yields
\begin{equation} \label{classicEKR}
|\A_0|\ \le\ \binom{n-1}{r-1}
\end{equation}
for $n\ge 2r$.

Each $A\in\A_1$ has the form $A=A'\cup\{v_i\}$ with $A'\subseteq P$ and $|A'|=r-1$. Define
\[
\A_1' \;=\; \{\,A\setminus\{v\}:\ A\in\A_1,\ v\in A\cap V(K_n)\,\}.
\]

\begin{claim}\label{cl:intersecting}
$\A_1'$ is intersecting.
\end{claim}
\begin{proof}
Suppose $A,B\in \A_1'$ with $A\cap B=\emptyset$. Then $A\cup\{v_i\},\,B\cup\{v_j\}\in \A_1$ for some $i,j$. If $i\ne j$, then $(A\cup\{v_i\})\cap (B\cup\{v_j\})=\emptyset$, contradicting that $\A_1$ is intersecting. Hence $i=j$. By shift-stability, $(B\cup\{v_i\}\setminus\{v_i\})\cup\{p_i\}=B\cup\{p_i\}\in \A$. However,
\[
(A\cup\{v_i\})\cap (B\cup\{p_i\})=\emptyset
\]
since $A\cap B=\emptyset$, $v_i\notin B\cup\{p_i\}$, and $p_i\notin A\cup\{v_i\}$, again a contradiction. Thus $\A_1'$ is intersecting.
\end{proof}

\medskip

Fix $p_1\in P$ and put $P'=P\setminus\{p_1\}$. Form the partition
\[
\A_1'\;=\;\mathcal{P}_1'\ \cup\ \overline{\mathcal{P}_1'},
\]
where
\[
\mathcal{P}_1'=\{X\in \A_1':\,p_1\in X\} \quad \text{ and }
\quad
\overline{\mathcal{P}_1'}=\{X\in \A_1':\,p_1\notin X\}.
\]
Define
\[
\mathcal{P}_1''=\{X\setminus\{p_1\}: X\in \mathcal{P}_1'\}\,
\text{ and }
\overline{\mathcal{P}_1''}=\{\,P'\setminus X:\ X\in \overline{\mathcal{P}_1'}\,\}.
\]
Note that sets in $\mathcal{P}_1''$ are of size $r-2$ and sets in $\overline{\mathcal{P}_1''}$ are of size $n-r$. Clearly, $|\mathcal{P}_1'|=|\mathcal{P}_1''|$ and $|\overline{\mathcal{P}_1'}|=|\overline{\mathcal{P}_1''}|$.

\bigskip

\begin{claim}\label{cl:katona-parameters}
Every two sets of $\overline{\mathcal{P}_1''}$ have intersection size at least $b:=n-2r+2$.
\end{claim}
\begin{proof}
Take $A,B\in \overline{\mathcal{P}_1''}$ and let $A',B'\in \overline{\mathcal{P}_1'}$ correspond so that $A=P'\setminus A'$ and $B=P'\setminus B'$. Then
\[
|A\cap B| = |P'\setminus (A'\cup B')|
= |P'|-|A'|-|B'|+|A'\cap B'|
= (n-1)-2(r-1)+|A'\cap B'|.
\]
By Claim~\ref{cl:intersecting}, $\overline{\mathcal{P}_1'}$ is intersecting, so $|A'\cap B'|\ge 1$. Hence $|A\cap B|\ge n-2r+2$, as claimed.
\end{proof}

Applying Theorem~\ref{thm:Katona} to the $a$-uniform family $\overline{\mathcal{P}_1''}$ with $a=n-r$ and $b=n-2r+2$ (so $a-b=r-2$) gives
\[
|\overline{\mathcal{P}_1''}|\ \le\ |\partial_{\,r-2}(\overline{\mathcal{P}_1''})|.
\]

\bigskip

\begin{claim}\label{cl:disjoint}
$\mathcal{P}_1'' \,\cap\, \partial_{\,r-2}(\overline{\mathcal{P}_1''})=\emptyset$.
\end{claim}
\begin{proof}
Suppose $X\in \mathcal{P}_1''$ and $U\in \partial_{\,r-2}(\overline{\mathcal{P}_1''})$. Then $X\subseteq P'\setminus B'$ for some $B'\in \overline{\mathcal{P}_1'}$, so $X\cap B'=\emptyset$. However, $X\cup\{p_1\}\in \mathcal{P}_1'\subseteq \A_1'$ and $B'\in \overline{\mathcal{P}_1'}\subseteq \A_1'$, with $(X\cup\{p_1\})\cap B'=\emptyset$, contradicting Claim~\ref{cl:intersecting}.
\end{proof}

\bigskip

Both families $\mathcal{P}_1''$ and $\partial_{\,r-2}(\overline{\mathcal{P}_1''})$ consist of $(r-2)$-subsets of $P'$, and by Claim~\ref{cl:disjoint}, they are disjoint. Therefore,
\[
|\mathcal{P}_1''| \;+\; |\partial_{\,r-2}(\overline{\mathcal{P}_1''})|
\ \le\ \binom{n-1}{r-2}.
\]
Thus,
\[
|\A_1'| \;=\; |\mathcal{P}_1'|+|\overline{\mathcal{P}_1'}|
\;=\; |\mathcal{P}_1''|+|\overline{\mathcal{P}_1''}|
\ \le\ |\mathcal{P}_1''|+|\partial_{\,r-2}(\overline{\mathcal{P}_1''})|
\ \le\ \binom{n-1}{r-2}.
\]

Each $(r-1)$-set in $\A_1'$ forbids its corresponding $r-1$ base vertices in $V(K_n)$, so it extends to an independent $r$-set in at most $n-r+1$ ways by adding one base vertex. Hence,
\[
|\A_1|\ \le\ (n-r+1)\,|\A_1'|
\ \le\ (n-r+1)\binom{n-1}{r-2}.
\]

Finally,
\[
|\A|
\;=\;
|\A_0|+|\A_1|
\ \le\ 
\binom{n-1}{r-1} + (n-r+1)\binom{n-1}{r-2}
\;=\;
r\binom{n-1}{r-1}.
\]

If $n>2r$, the EKR inequalities used for $|\A_0|$ in (\ref{classicEKR}) and for the shadow-bound component are strict unless the respective families are stars; tracing back through the equalities forces $\A$ to be exactly an $r$-star centered at a pendant vertex. When $n=2r$, non-star extremals can occur (as in classical EKR), so we do not claim strictness in that boundary case. This matches the behavior described in~\cite{DeSilvaDDH2023} in Theorem 4.
\end{proof}


\section{Generalized Pendant Complete Graphs}\label{sec4}

In this section, we extend the $K_n^*$ result to pendant constructions over a complete base with larger pendant cliques. 
Given $K_n$ and a sequence $\s=(s_1,\dots,s_n)$ of positive integers, the \emph{generalized pendant complete graph} $K_n^{\s}$ is formed by attaching to each $v_i\in V(K_n)$ a clique $K_{s_i}$; the vertices of $K_{s_i}$ are adjacent to $v_i$ and to no other base vertex. The special cases are $K_n^{*}=K_n^{\mathbf{1}}$ and $K_n^m:=K_n^{m\mathbf{1}}$.\\

\noindent We will use the following result of Bollobás and Leader for the disjoint union of $n$ complete graphs $K_m$, which we denote by $n K_m$.

\begin{theorem}[Bollobás--Leader~\cite{BollLead}]\label{thm:BollobasLeader}
Let $m\ge 2$ and $1\le r\le n$. If $\A\subseteq \mathcal{I}^{(r)}(nK_m)$ is intersecting, then
\[
|\A|\ \le\ m^{\,r-1}\binom{n-1}{r-1}\,.
\]
\end{theorem}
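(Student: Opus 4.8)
The plan is to carry the argument of Theorem~\ref{thm:KnstarEKR} over to this setting, treating the vertex chosen inside each clique as an extra ``label'' coordinate. Identify $V(nK_m)$ with $[n]\times[m]$, where the $i$-th clique is $\{i\}\times[m]$. Every independent $r$-set of $nK_m$ has the form $\{(i,\phi(i)):i\in A\}$ for a unique $A\in\binom{[n]}{r}$ and a unique $\phi\colon A\to[m]$; two such sets meet in $nK_m$ exactly when the underlying subsets of $[n]\times[m]$ meet, i.e.\ when $\phi$ and $\psi$ agree at some column lying in both their domains. Thus we must bound intersecting families of $r$-subsets of $[n]\times[m]$ that use at most one point per column. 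Note also that the target $m^{r-1}\binom{n-1}{r-1}$ is exactly the size of the $r$-star centered at a fixed vertex $(i^*,j^*)$: the $r-1$ other cliques range over $\binom{n-1}{r-1}$ choices and each contributes a factor $m$.

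First I would reduce to a canonical form using two kinds of compressions. The classical $(i,i')$-shifts acting on the column set $A$ (order the columns linearly and push toward column $1$) preserve both $|\A|$ and the intersecting property by the usual case analysis. In addition, for each column $i$ one has a ``label shift'' replacing the label $j$ by a fixed label at column $i$ whenever the resulting set lies outside the family; the point to verify here is the colored analogue of the standard shift lemma, namely that this operation, applied to the whole family, again yields an intersecting family. Iterating both families of shifts to a fixed point, $\A$ becomes left-compressed in the columns and label-compressed within each column. Then, fixing column $1$, I would split $\A$ into the part $\A_0$ avoiding $\{1\}\times[m]$ and, for each $j\in[m]$, the part $\A_j$ meeting it at $(1,j)$. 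The family $\A_0$ is an intersecting family in $(n-1)K_m$, handled by induction on $n$; deleting $(1,j)$ from the members of $\A_j$ gives $(r-1)$-uniform families $\A_j'$ in $(n-1)K_m$ that are pairwise cross-intersecting for distinct $j$ (two members of $\A$ that meet $\{1\}\times[m]$ at different points must already meet outside it). The heart of the argument is a Katona-type estimate in the spirit of the proof of Theorem~\ref{thm:KnstarEKR}: after complementing inside $(n-1)K_m$ one applies Theorem~\ref{thm:Katona} together with a shadow-disjointness claim (the colored version of Claim~\ref{cl:disjoint}) to show that the $\A_j'$ and the relevant shadows fit together disjointly inside a single level of $(n-1)K_m$, which, after restoring the factors of $m$, yields $\sum_{j}|\A_j|\le m^{r-1}\binom{n-2}{r-2}$. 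Adding this to $|\A_0|\le m^{r-1}\binom{n-2}{r-1}$ gives $|\A|\le m^{r-1}\binom{n-1}{r-1}$.

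The step I expect to be the main obstacle is exactly this last one: controlling the cross-intersecting traces $\A_j'$. Taken in isolation $\sum_j|\A_j'|$ is not bounded by the required quantity --- a single $\A_j'$ could consist of all independent $(r-1)$-sets --- so the bound must genuinely use the interaction of the $\A_j'$ with $\A_0$ and with left-compressedness, i.e.\ the colored shadow-disjointness claim must be set up so that it sees all of these constraints at once. This is the same place where the proof of Theorem~\ref{thm:KnstarEKR} relied on shift-stability, and threading the $m$ labels through it is where the bookkeeping is most delicate; in particular one must take extra care in the regime $n<2r$, where the column-arcs behave differently. An alternative route is a Katona-style averaging over cyclic orderings of the $n$ columns, which for $n\ge 2r$ reduces the statement to the case $n=r$ --- an Erd\H{o}s--Ko--Rado statement for functions $[r]\to[m]$ under pointwise agreement --- at the cost of having to count pairwise-meeting arc configurations in the colored cycle; the case $n=r$ would then be proved separately by induction on $r$.
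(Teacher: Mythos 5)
First, note that the paper does not prove this statement: it is quoted from Bollobás--Leader and used as a black box in Theorems~\ref{thm:PendantKn} and~\ref{thm:general-s}, so there is no in-paper argument to compare yours against; the citation is the intended resolution. Judged on its own, your sketch gets the setup right (the signed-set encoding, the star size $m^{r-1}\binom{n-1}{r-1}$, the pairwise cross-intersecting property of the traces $\A_j'$), but it contains a step that fails as written. You left-compress the columns toward column $1$ and then split off column $1$. The compressed extremal family --- the star at $(1,1)$, which is fixed by all of your shifts --- then has $\A_0=\emptyset$ and $\sum_j|\A_j|=m^{r-1}\binom{n-1}{r-1}$, which strictly exceeds the bound $\sum_j|\A_j|\le m^{r-1}\binom{n-2}{r-2}$ that your induction needs whenever $r\le n-1$. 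So the two piece-bounds you propose cannot both hold; the split must be made at the column \emph{away} from which you compressed (the standard Frankl induction splits at column $n$ after pushing toward $1$), and even then the argument that the traces $\A_j'$ are individually intersecting (so that induction applies to each) uses the "replace the deleted column by an unused one" step, which requires $n\ge 2r$.

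That points to the deeper issue: the theorem is asserted for all $1\le r\le n$, and its whole content for $m\ge 2$ is that it survives into the range $r>n/2$, where any two $r$-subsets of $[n]$ already meet as column sets and everything you import from Theorem~\ref{thm:KnstarEKR} degenerates --- the complementation inside $P'$ and Katona's Theorem~\ref{thm:Katona} are applied there with $b=n-2r+2$, which is nonpositive for $2r\ge n+2$, and the cycle method likewise loses its force. You flag this regime but supply no mechanism for it, and your "alternative route" has the reduction backwards (for $n\ge 2r$ the cycle method finishes directly; it is $n<2r$ that would need reducing to $n=r$, and that reduction is not routine). A workable skeleton is: prove the case $n=r$ by partitioning $[m]^r$ into the $m^{r-1}$ diagonal cosets $\{f,f+\mathbf{1},\dots,f+(m-1)\mathbf{1}\}$ modulo $m$, no two members of a coset agreeing anywhere; then induct on $n$ splitting at column $n$, where bounding $\sum_j|\A_j'|$ for $m$ pairwise cross-intersecting $(r-1)$-uniform families in $(n-1)K_m$ is exactly the step you leave open and genuinely requires a further idea when $n<2r$. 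As it stands the proposal is an outline with the decisive estimate missing, not a proof; since the paper only ever invokes the theorem with $n\ge 2r$, keeping the citation (or proving only that sub-case) is the cleaner course.
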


\medskip

\begin{theorem}\label{thm:PendantKn}
Let $m\ge 2$. Then the generalized pendant complete graph $K_n^m$ is $r$-EKR for all $n\ge 2r$.
In particular, the case $m=1$ is $K_n^*$ and follows from Theorem~\ref{thm:KnstarEKR}.
\end{theorem}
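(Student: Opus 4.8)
The plan is to run the shifting-and-splitting scheme of the proof of Theorem~\ref{thm:KnstarEKR}, replacing the two appeals to classical EKR (for $\A_0$ directly, and for $\A_1'$ through Katona's shadow bound) by two appeals to the Bollob\'as--Leader bound of Theorem~\ref{thm:BollobasLeader} for $nK_m$. We may assume $r\ge 2$, the case $r=1$ being trivial. Each base vertex $v_i$ together with its attached clique $K_m$ spans a copy of $K_{m+1}$, call it the $i$-th \emph{gadget}; every independent $r$-set of $K_n^m$ meets each gadget in at most one vertex and contains at most one base vertex. The first step is to pin down the quantity to beat: a star centered at a base vertex has $m^{r-1}\binom{n-1}{r-1}$ members, whereas for a pendant vertex $v$ the other $r-1$ vertices of a set in $\ir_v(K_n^m)$ are forced into $r-1$ distinct gadgets other than the one containing $v$, subject to at most one base vertex among them, so
\[
|\ir_v(K_n^m)| \;=\; \binom{n-1}{r-1}\bigl(m^{r-1}+(r-1)m^{r-2}\bigr),
\]
which is strictly larger for $r\ge 2$. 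Since $\mathrm{Aut}(K_n^m)$ is transitive on pendant vertices and on base vertices, these are the only two star sizes, so the displayed number is the target. (For $m=1$ it equals $r\binom{n-1}{r-1}$, matching Theorem~\ref{thm:KnstarEKR}.)

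Let $\A\subseteq\ir(K_n^m)$ be intersecting. For each $i$ apply the shift $S_i$ that sends $A$ to $(A\setminus\{v_i\})\cup\{v^i_1\}$ when $v_i\in A$ and this set is not already in $\A$, and fixes $A$ otherwise. Because $N(v^i_1)\setminus\{v_i\}=\{v^i_2,\dots,v^i_m\}\subseteq N(v_i)$, this is a legitimate graph-shift: it maps independent sets to independent sets of the same size, preserves the intersecting property, and iterating $S_1,\dots,S_n$ terminates, so we may assume $\A$ is shift-stable. Partition $\A=\A_0\sqcup\A_1$ according to whether a member contains no base vertex or exactly one. The members of $\A_0$ are independent $r$-sets of $nK_m$, so Theorem~\ref{thm:BollobasLeader} gives $|\A_0|\le m^{r-1}\binom{n-1}{r-1}$.

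For $\A_1$, set $\A_1'=\{A\setminus\{v_i\}: A\in\A_1,\ v_i\in A\cap V(K_n)\}$, a family of independent $(r-1)$-sets of $nK_m$. Imitating the proof of Claim~\ref{cl:intersecting} shows $\A_1'$ is intersecting: if $A',B'\in\A_1'$ were disjoint with base-vertex witnesses $v_i,v_j$, then $i\ne j$ already contradicts that $\A_1$ is intersecting, while for $i=j$ shift-stability replaces $v_i$ by $v^i_1$ in $B'\cup\{v_i\}$ to produce a pair of disjoint members of $\A$. Hence Theorem~\ref{thm:BollobasLeader}, applied with $r-1$ in place of $r$, gives $|\A_1'|\le m^{r-2}\binom{n-1}{r-2}$. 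Each $A'\in\A_1'$ meets exactly $r-1$ gadgets, so it completes to a member of $\A_1$ only by adjoining the base vertex of one of the $n-r+1$ untouched gadgets, whence $|\A_1|\le(n-r+1)|\A_1'|$. Using the identity $(n-r+1)\binom{n-1}{r-2}=(r-1)\binom{n-1}{r-1}$,
\[
|\A|=|\A_0|+|\A_1|\ \le\ \bigl(m^{r-1}+(r-1)m^{r-2}\bigr)\binom{n-1}{r-1},
\]
which equals the size of a pendant-centered $r$-star, so $K_n^m$ is $r$-EKR.

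The counting steps are routine; I expect the delicate points to be two. First, one must justify carefully that the clique-shift $v_i\mapsto v^i_1$ is a genuine compression in the graph-theoretic sense --- the inclusion $N(v^i_1)\setminus\{v_i\}\subseteq N(v_i)$ is exactly the hypothesis of the standard compression lemma, and it is also what powers the ``$i=j$'' case of the intersecting argument for $\A_1'$. Second, upgrading ``$r$-EKR'' to the strict statement (the only extremal family being a pendant-centered star) for $n>2r$ would require the equality characterization of Theorem~\ref{thm:BollobasLeader} for both $\A_0$ and $\A_1'$, together with a short argument forcing the two resulting stars to share a pendant center; because the boundary $n=2r$ already admits non-star extremal families in the classical case, I would state only the non-strict version here, consistent with the theorem. (One may also observe that the hypothesis $n\ge 2r$ is invoked only to subsume $m=1$ through Theorem~\ref{thm:KnstarEKR}: for $m\ge 2$ the argument above needs only $n\ge r$, since Theorem~\ref{thm:BollobasLeader} imposes no lower bound beyond $r\le n$.)
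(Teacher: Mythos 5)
Your proof is correct and follows essentially the same route as the paper's: the clique-shift $v_i\mapsto v^i_1$, the partition $\A=\A_0\sqcup\A_1$ by the number of base vertices, two applications of Theorem~\ref{thm:BollobasLeader} (to $\A_0$ and to the intersecting family $\A_1'$), and the factor $n-r+1$ for re-attaching a base vertex, with the final count agreeing with the pendant-centered star. Your closing observation that for $m\ge 2$ the argument only requires $n\ge r$ (since Bollob\'as--Leader needs no threshold beyond $r\le n$) is a correct and worthwhile strengthening that the paper does not record.
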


\begin{proof}
Label the vertices of the pendant clique at $v_i$ by $V(K_m)=\{v_1^i,\dots,v_m^i\}$. Let $\A\subseteq \mathcal{I}^{(r)}(K_n^m)$ be intersecting.

For each base vertex $v_i$, define a shift
\[
S_i(A)=
\begin{cases}
(A\setminus\{v_i\})\cup\{v_1^i\}, &\text{if } v_i\in A \text{ and } (A\setminus\{v_i\})\cup\{v_1^i\}\notin \A,\\
A,&\text{otherwise}.
\end{cases}
\]
After iterating $S_1,\dots,S_n$, assume $\A$ is shift-stable. Partition $\A$ into two sets,
\[
\A_0=\{A\in\A:\ A\cap V(K_n)=\emptyset\} \quad \text{ and }
\quad
\A_1=\{A\in\A:\ |A\cap V(K_n)|=1\}.
\]
On the pendant layer $nK_m$, Theorem~\ref{thm:BollobasLeader} gives
\[
|\A_0|\ \le\ m^{\,r-1}\binom{n-1}{r-1}.
\]

For each $A\in\A_1$ remove its unique base vertex to obtain an $(r-1)$-set in $nK_m$:
\[
\A_1'=\{A\setminus\{v\}:\ A\in\A_1,\ v\in A\cap V(K_n)\}\ \subseteq \mathcal{I}^{(r-1)}(nK_m).
\]

\bigskip

\begin{claim}\label{cl:A1prime-int}
$\A_1'$ is intersecting.
\end{claim}
\begin{proof}
If $A,B\in \A_1'$ were disjoint, there exist $i,j$ with $A\cup\{v_i\},B\cup\{v_j\}\in\A_1$. If $i\ne j$ these are disjoint, contradicting that $\A$ is intersecting. If $i=j$, shift-stability implies $A\cup\{v_1^i\}\in\A$. Then 
\[
(A\cup\{v_1^i\})\cap (B\cup\{v_i\})=\emptyset,
\]
which is a contradiction.
\end{proof}

\bigskip

Applying Theorem~\ref{thm:BollobasLeader} to $\A_1'\subseteq \mathcal{I}^{(r-1)}(nK_m)$ yields
\[
|\A_1'|\ \le\ m^{\,r-2}\binom{n-1}{r-2}.
\]
Each $A\in\A_1'$ forbids exactly the $r-1$ base vertices paired with its chosen pendants, so $A$ extends to a member of $\A_1$ in at most $n-r+1$ ways. Hence
\[
|\A_1|\ \le\ (n-r+1)\,|\A_1'|\ \le\ (n-r+1)\,m^{\,r-2}\binom{n-1}{r-2}.
\]

Summing gives
\[
|\A|\ =\ |\A_0|+|\A_1|\ \le\ m^{\,r-1}\binom{n-1}{r-1}+(n-r+1)\,m^{\,r-2}\binom{n-1}{r-2}
= m^{\,r-2}(m+r-1)\binom{n-1}{r-1}.
\]

Consider the $r$-star at a canonical pendant, say $v_1^1$. By partitioning according to whether or not a set contains a base vertex, we have
\[
|\mathcal{I}^{(r)}_{v_1^1}(K_n^m)|
=
m^{\,r-1}\binom{n-1}{r-1}
\;+\;
(n-1)\,m^{\,r-2}\binom{n-2}{r-2}
\;=\; m^{\,r-2}(m+r-1)\binom{n-1}{r-1},
\]
so the bound is tight and $K_n^m$ is $r$-EKR for $n\ge 2r$.
\end{proof}

\medskip

\begin{theorem}\label{thm:general-s}
Let $\s=(s_1,\dots,s_n)$ be a sequence of positive integers and $n\ge 2r$. Then $K_n^{\s}$ is $r$-EKR.
\end{theorem}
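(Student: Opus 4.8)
The plan is to rerun the two-layer argument from the proof of Theorem~\ref{thm:PendantKn}, with Bollob\'as--Leader replaced by its analogue for disjoint unions of cliques of \emph{unequal} sizes. Relabel the base vertices so that $s_1\le s_2\le\cdots\le s_n$, write $\s^{(i)}$ for the length-$(n-1)$ sequence obtained from $\s$ by deleting the entry $s_i$, and let $e_k$ denote the $k$-th elementary symmetric polynomial, so that $e_k(\s^{(i)})$ counts the independent $k$-sets of $\bigsqcup_{j\ne i}K_{s_j}$. Removing a pendant vertex $u$ of $K_{s_i}$ together with its neighbors leaves $K_{n-1}^{\s^{(i)}}$, so splitting on whether a set uses a base vertex yields
\[
|\mathcal{I}^{(r)}_{u}(K_n^{\s})|
=|\mathcal{I}^{(r-1)}(K_{n-1}^{\s^{(i)}})|
=e_{r-1}(\s^{(i)})+(n-r+1)\,e_{r-2}(\s^{(i)});
\]
since deleting the smallest entry maximizes an elementary symmetric polynomial of the remaining entries, this is largest when $i$ indexes a clique of minimum size. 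Write $M$ for that maximum; the goal is $|\A|\le M$ for every intersecting $\A\subseteq\mathcal{I}^{(r)}(K_n^{\s})$.

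The one genuinely new ingredient is a generalized Bollob\'as--Leader bound, which I would isolate as a lemma: for $N\ge 2k$ and positive integers $t_1,\dots,t_N$, every intersecting family $\mathcal{F}\subseteq\mathcal{I}^{(k)}\bigl(\bigsqcup_{i=1}^{N}K_{t_i}\bigr)$ satisfies $|\mathcal{F}|\le\max_i e_{k-1}(t_1,\dots,\widehat{t_i},\dots,t_N)$, with the bound attained by a star centered in a clique of minimum size. This recovers Theorem~\ref{thm:BollobasLeader} when the $t_i$ are equal and the classical Erd\H{o}s--Ko--Rado theorem when every $t_i=1$, the latter being exactly where the hypothesis $N\ge 2k$ is genuinely needed. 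I would prove it by transcribing the compression proof of the equal-size case --- shifting each clique internally toward a fixed canonical vertex and then analyzing the resulting compressed family --- with products of the $t_i$ replacing powers of $m$ throughout; alternatively one may appeal to known Erd\H{o}s--Ko--Rado results for disjoint unions of complete graphs, whose independence complexes have all maximal independent sets of size $N\ge 2k$.

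Granting the lemma, the rest is a direct copy of the $K_n^m$ argument. Iterating the clique-shift $S_i(A)=(A\setminus\{v_i\})\cup\{v_1^i\}$ over all base vertices, we may assume $\A$ is shift-stable, and we partition $\A=\A_0\sqcup\A_1$ according to whether a member meets $V(K_n)$. The pendant layer is $\bigsqcup_{i=1}^{n}K_{s_i}$, so the lemma with $(k,N)=(r,n)$ gives $|\A_0|\le e_{r-1}(\s^{(1)})$. Deleting the unique base vertex of each member of $\A_1$ produces $\A_1'\subseteq\mathcal{I}^{(r-1)}\bigl(\bigsqcup_{i=1}^{n}K_{s_i}\bigr)$, which is intersecting by the shift-stability argument of Claim~\ref{cl:A1prime-int}, so the lemma with $(k,N)=(r-1,n)$ gives $|\A_1'|\le e_{r-2}(\s^{(1)})$. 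A member of $\A_1'$ occupies exactly $r-1$ cliques and hence forbids the $r-1$ matching base vertices, so it extends to a member of $\A_1$ in at most $n-r+1$ ways; therefore $|\A_1|\le(n-r+1)\,e_{r-2}(\s^{(1)})$. Adding the two estimates,
\[
|\A|=|\A_0|+|\A_1|\le e_{r-1}(\s^{(1)})+(n-r+1)\,e_{r-2}(\s^{(1)})=M .
\]
As every star is an intersecting family and the $M$-element star at a pendant of $K_{s_1}$ attains this bound, that star is a largest intersecting family; that every largest one is such a star follows by tracing the equalities back through the lemma, the boundary case $n=2r$ being handled exactly as in the proof of Theorem~\ref{thm:KnstarEKR}. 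Hence $K_n^{\s}$ is $r$-EKR.

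The main obstacle is the generalized Bollob\'as--Leader lemma: the two-layer bookkeeping above is routine, but one must verify that the compression proof of the equal-size theorem survives the loss of symmetry --- in particular that its extremal analysis still singles out a minimum-size clique as the optimal star center --- or else locate a sufficiently sharp statement for unions of unequal complete graphs already in the literature.
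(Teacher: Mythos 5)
Your two-layer bookkeeping is sound and, granting your lemma, it does deliver the theorem: the quantity $e_{r-1}(\s^{(1)})+(n-r+1)\,e_{r-2}(\s^{(1)})$ is indeed the size of the star at a pendant of a smallest clique (the factor $n-r+1$ correctly aggregates the base-vertex extensions), and the intersecting-ness of $\A_1'$ transfers verbatim from Claim~\ref{cl:A1prime-int}. But the generalized Bollob\'as--Leader lemma, which you rightly flag as the crux, is left unproven, and the suggested route --- ``transcribe the compression proof of the equal-size case with products of the $t_i$ replacing powers of $m$'' --- is not a mechanical transcription. The equal-size theorem with $m\ge 2$ holds for all $n\ge r$ (as stated in Theorem~\ref{thm:BollobasLeader}), whereas your lemma must also cover tuples containing entries equal to $1$, where the problem degenerates toward classical Erd\H{o}s--Ko--Rado, the threshold $N\ge 2k$ becomes essential, and the extremal star migrates to a singleton clique. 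That mixed regime is exactly where the symmetry exploited in the equal-size proof breaks down, so the lemma needs its own argument (or a precise citation), not an appeal to analogy; the closing claim that every extremal family is a star is likewise asserted rather than derived.

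The gap is fillable, and it is instructive to compare with the paper's route: rather than prove an EKR theorem for the unequal pendant layer, the paper performs a single local shift $v_1^n\mapsto v_2^n$ inside a largest pendant clique, partitions $\A$ by the presence of $v_1^n$ into an intersecting family of $r$-sets in $K_n^{\sprime}$ (the same graph with $s_n$ decreased by one) and an intersecting family of $(r-1)$-sets in $K_{n-1}^{\sdouble}$, and inducts on $(r,\sum_i s_i)$ down to the equal-size case of Theorem~\ref{thm:PendantKn}; the counting identity of Claim~\ref{claim:counting-star-s} reassembles the two inductive star bounds into the star bound for $K_n^{\s}$. This sidesteps your lemma entirely: only the equal-size Bollob\'as--Leader theorem is ever invoked. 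If you wish to keep your architecture, the cleanest repair is to prove your lemma by the same device --- shift $v_1^N\mapsto v_2^N$ in a largest clique of $\bigsqcup_i K_{t_i}$, split on the presence of $v_1^N$, and induct on $\sum_i t_i$ with classical EKR (the all-singleton case, where $N\ge 2k$ is used) as the base case. At that point, however, you will have reproved the paper's induction on a subgraph, so running the induction directly on $K_n^{\s}$ is the more economical option.
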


\begin{proof}
By relabeling the base vertices, assume $s_1\le \cdots \le s_n$, and within the largest pendant clique $K_{s_n}$ fix distinct vertices $v_1^n,v_2^n$ (if all $s_i$ are equal, then $K_n^{\s}=K_n^m$ and the result follows from Theorem~\ref{thm:PendantKn} together with the $m=1$ case from Theorem~\ref{thm:KnstarEKR}). Let $\A\subseteq \mathcal{I}^{(r)}(K_n^{\s})$ be intersecting.

Define a \emph{local} shift $T$ on sets in $\A$ by
\[
T(A)=
\begin{cases}
(A\setminus\{v_1^n\})\cup\{v_2^n\}, &\text{if } v_1^n\in A \text{ and } (A\setminus\{v_1^n\})\cup\{v_2^n\}\notin \A,\\
A,&\text{otherwise}.
\end{cases}
\]
Replacing $\A$ by $T(\A)$ if necessary, we may assume $T(\A)=\A$ (shift-stable within $K_{s_n}$).

Partition $\A$ by the presence of $v_1^n$:
\[
\A_0=\{A\in\A:\ v_1^n\notin A\},\qquad
\A_1=\{A\in\A:\ v_1^n\in A\},\qquad
\A_1'=\{A\setminus\{v_1^n\}:\ A\in\A_1\}.
\]
Let $\s'=(s_1,\dots,s_n-1)$ and $\s''=(s_1,\dots,s_{n-1})$.

\bigskip

\begin{claim}\label{claim:intersectingA1prime}
$\A_1'\subseteq \mathcal{I}^{(r-1)}\!\bigl(K_{n-1}^{\s''}\bigr)$ is intersecting.
\end{claim}
\begin{proof}
Each $A'\in \A_1'$ is an $(r-1)$-set in $K_{n-1}^{\s''}$ since we delete the single vertex $v_1^n$. 
If $A_1',A_2'\in \A_1'$ were disjoint, let $A_i=A_i'\cup\{v_1^n\}\in\A_1$ ($i=1,2$). By $T$-stability, $A_1' \cup\{v_2^n\}\in\A$. But then 
\[
\bigl(A_1'\cup\{v_2^n\}\bigr)\cap \bigl(A_2'\cup\{v_1^n\}\bigr)=\emptyset,
\]
contradicting that $\A$ is intersecting.
\end{proof}

\bigskip

\begin{claim}\label{claim:counting-star-s}
For any pendant $v_1^1$,
\[
\bigl|\mathcal{I}^{(r)}_{v_1^1}(K_n^{\s})\bigr|
\;=\;
\bigl|\mathcal{I}^{(r)}_{v_1^1}(K_n^{\s'} )\bigr|
\;+\;
\bigl|\mathcal{I}^{(r-1)}_{v_1^1}(K_{n-1}^{\s''})\bigr|.
\]
\end{claim}
\begin{proof}
Partition $\mathcal{I}^{(r)}_{v_1^1}(K_n^{\s})$ by whether $v_1^n$ is present. Removing $v_1^n$ gives a bijection from the subfamily containing $v_1^n$ to $\mathcal{I}^{(r-1)}_{v_1^1}(K_{n-1}^{\s''})$, while the subfamily avoiding $v_1^n$ is precisely $\mathcal{I}^{(r)}_{v_1^1}(K_n^{\s'})$.
\end{proof}

\bigskip

We now argue by double induction on $(r,\sum_i s_i)$ (lexicographic order). The case $r=1$ is trivial. If $s_1=\cdots=s_n$, the result follows from Theorem~\ref{thm:PendantKn} (and Theorem~\ref{thm:KnstarEKR} when $s_i=1$ for every $i$). Otherwise $s_n\ge 2$ and we apply the local shift $T$ above.

Since $\A_0$ is an intersecting family of $r$-sets in $K_n^{\s'}$ and $\A_1'$ is an intersecting family of $(r-1)$-sets in $K_{n-1}^{\s''}$ by Claim~\ref{claim:intersectingA1prime}, the inductive hypothesis gives
\[
|\A_0|\ \le\ \bigl|\mathcal{I}^{(r)}_{v_1^1}(K_n^{\s'})\bigr|\,,\qquad
|\A_1'|\ \le\ \bigl|\mathcal{I}^{(r-1)}_{v_1^1}(K_{n-1}^{\s''})\bigr|.
\]
Since $|\A|=|\A_0|+|\A_1'|$, Claim~\ref{claim:counting-star-s} yields
\[
|\A|\ \le\ \bigl|\mathcal{I}^{(r)}_{v_1^1}(K_n^{\s'})\bigr|+\bigl|\mathcal{I}^{(r-1)}_{v_1^1}(K_{n-1}^{\s''})\bigr|
\ =\ \bigl|\mathcal{I}^{(r)}_{v_1^1}(K_n^{\s})\bigr|,
\]
and hence, $K_n^{\s}$ is $r$-EKR.
\end{proof}

\section{Counterexample for Pendant Path Graphs}\label{sec5}

\noindent In contrast to the EKR behavior observed in pendant complete graphs and their generalizations, the EKR property does not hold universally for all pendant constructions. In this section, we demonstrate that the pendant path graph $P_n^*$ fails to be $(n-k)$-EKR when $n \ge 3k + 2$ and $k\geq 2$ (we treat the $k=0$ and $k=1$ cases separately using subtly different constructions). Specifically, we construct an intersecting family of independent sets of size $n-k$ whose size exceeds that of the star family centered at $p_2$ in $P_n^*$, which De Silva et al. (\cite{DeSilvaDDH2023}) proved is the largest star center.\\

\noindent We acknowledge De Silva et al. had previously proven the pendant path graph fails to be $n$-EKR. However, we provide an alternate construction in Theorem \ref{notEKR3}. Our approach involves partitioning based on the value of $k$, and in each case, we exhibit explicit families that augment a standard leaf-centered star into a strictly larger intersecting family.

\bigskip

\begin{theorem}\label{notEKR1}
Let $n,k\in\mathbb{N}$ with $k\ge 2$. If $n\ge 3k+2$, then the pendant path graph $P_n^*$ is \emph{not} $(n-k)$‑EKR.
\end{theorem}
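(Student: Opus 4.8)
The plan is to construct an explicit intersecting family of independent $(n-k)$-sets in $P_n^*$ that is strictly larger than the biggest $r$-star, which by \cite{DeSilvaDDH2023} is centered at $p_2$ (equivalently $p_{n-1}$). First I would recall the structure of $\I^{(r)}(P_n^*)$: writing the base path as $v_1v_2\cdots v_n$ with pendant $p_i$ attached at $v_i$, an independent set may contain at most one of $\{v_i,p_i\}$ for each $i$, may contain $p_i$ freely, but cannot contain two consecutive $v_i$'s. Since $r=n-k$ is large (close to $n$), independent $r$-sets are highly constrained: they must hit "most" of the $n$ index-pairs, and a convenient way to count them is to record, for each index $i\in[n]$, whether the set uses $v_i$, uses $p_i$, or misses index $i$ entirely — with the constraints that the number of missed indices is exactly the deficiency (here $k$, since each index contributes at most one element and $r=n-k$) and that no two consecutive indices both carry a $v$.

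The core of the argument is to pick the right common-intersection structure so that the family beats the $p_2$-star. I would center the family not at a single vertex but engineer it around a small "core" set of indices near one end — concretely, I expect to take the family $\A$ of all independent $(n-k)$-sets $A$ such that $A$ contains $p_2$ \emph{or} $A$ contains a suitable alternative configuration on indices $1,2,3$ that still forces pairwise intersection (for instance, sets containing $v_1$ together with $v_3$, which block $v_2$ and so must "spend" their deficiency elsewhere while still meeting every $p_2$-set). The hypothesis $n\ge 3k+2$ is exactly what gives enough room: with $k$ missed indices to distribute among the remaining $\approx n-3$ indices, the number of ways to complete such sets is a polynomial in $n$ of degree $k$, and the "bonus" sets one adds beyond the plain $p_2$-star contribute a positive lower-order-comparable term that tips the balance once $n$ is at least the stated threshold. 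I would compute $|\I^{(r)}_{p_2}(P_n^*)|$ as a sum of binomial-type counts (choosing which $k$ of the available indices are missed, subject to the no-two-consecutive-$v$ condition), then show $|\A|$ exceeds it by exhibiting an explicit injection from the $p_2$-star into $\A$ together with at least one set of $\A$ outside the image; verifying that $\A$ is genuinely intersecting — i.e. that every "bonus" set meets every other member — is the step requiring care.

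\textbf{Main obstacle.} The hard part will be twofold: first, choosing the augmenting configuration so that intersection is preserved — any set added outside the $p_2$-star must meet \emph{both} the $p_2$-sets and the other added sets, and the no-consecutive-$v$ constraint makes ad hoc choices fragile — and second, making the counting comparison clean enough to pin down the threshold $n\ge3k+2$ rather than some weaker bound. I anticipate that the safest route is to keep the augmenting family small (a single extra "type" of set, all sharing a common pair of vertices among indices $1,2,3$) so that internal intersection is automatic, and to reduce the external-intersection check to the observation that a set missing index $2$ and using $v_1,v_3$ must, by the deficiency count, still contain $p_i$ for enough small $i$ to collide with any $p_2$-set; the counting then becomes a comparison of two explicit binomial sums in which the $n\ge 3k+2$ bound is what makes the difference nonnegative and strictly positive. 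Once the right configuration is isolated, the remaining calculations are routine binomial-coefficient manipulation, which I would carry out but not belabor here.
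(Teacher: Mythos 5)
Your high-level strategy --- augmenting the largest star $\mathcal{I}^{(n-k)}_{p_2}(P_n^*)$ by extra independent $(n-k)$-sets that still meet every star member --- is exactly the paper's strategy, and your instinct that the augmenting family should be kept small is also right. But there is a genuine gap: the proposal never commits to a working augmenting configuration, and the one candidate it floats (sets containing $v_1$ and $v_3$ and missing index $2$) fails. Concretely, take $n=8$, $k=2$ (so $n=3k+2$ and $r=6$): the set $B=\{v_1,v_3,p_4,p_5,p_6,p_7\}$ is an independent $6$-set of your proposed type, yet the star member $S=\{p_1,p_2,p_3,v_4,v_6,p_8\}$ is disjoint from $B$, so adding $B$ already destroys the intersecting property. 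The difficulty is that a set of your type can place all of its remaining mass on indices $4,\dots,n$ using pendants, which a $p_2$-set can dodge entirely by switching to base vertices there; nothing in the deficiency count forces a collision at small indices as you hoped.

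The paper's fix is to add a \emph{single} augmenting set of a very different shape: $T_k=\{p_{k+1},p_{k+2},\dots,p_n\}$, the all-pendant set on the last $n-k$ indices. A star member $S$ disjoint from $T_k$ must use only base vertices on indices $k+1,\dots,n$, and the induced path on those base vertices has independence number $\lceil (n-k)/2\rceil$; together with at most $k$ vertices from the first $k$ base--pendant pairs, this caps $|S|$ at $k+\lceil(n-k)/2\rceil$, which is strictly less than $n-k$ precisely when $n\ge 3k+2$. That is where the threshold comes from. Because only one new set is added, no counting comparison of the kind you describe is needed: $\mathcal{L}_2\cup\{T_k\}$ is automatically strictly larger than $\mathcal{L}_2$. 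The lesson for salvaging your approach is that the augmenting set must be chosen so that \emph{avoiding} it forces a star member into a low-independence region of the graph, not merely so that it overlaps the star near the center $p_2$.
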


\begin{proof}
Let $\mathcal{L}_2:=\mathcal{I}^{(n-k)}_{p_2}(P_n^*)$, which is a largest $(n-k)$‑star by \cite[Theorem ~5]{DeSilvaDDH2023}.  Define the single $(n-k)$‑set
\[
T_k\ :=\ \{p_{k+1},p_{k+2},\dots,p_n\}\ \in\ \mathcal{I}^{(n-k)}(P_n^*).
\]
We claim that every $S\in\mathcal{L}_2$ intersects $T_k$ when $n\ge 3k+2$. Suppose not. Then $S\cap T_k=\emptyset$, so $S$ uses no pendants among indices $k+1,\dots,n$. Consequently, all vertices of $S$ with indices in $\{k+1,\dots,n\}$ must lie in the base set $\{x_{k+1},\dots,x_n\}$.

The subgraph induced by $\{x_{k+1},\dots,x_n\}$ is a path on $n-k$ vertices, whose independence number is $\alpha=\lceil (n-k)/2\rceil$. From indices $1,\dots,k$, the set $S$ contains $p_2$ and may include at most $k-1$ other pendants (one from each remaining base–pendant pair). Hence,
\[
|S|\ \le\ 1+(k-1)+\Big\lceil\frac{n-k}{2}\Big\rceil\ =\ k+\Big\lceil\frac{n-k}{2}\Big\rceil.
\]
Under $n\ge 3k+2$ we have $\lceil (n-k)/2\rceil\le (n-k)/2$ and therefore
\[
k+\Big\lceil\frac{n-k}{2}\Big\rceil\ \le\ k+\frac{n-k}{2}\ <\ n-k,
\]
contradicting that $|S|=n-k$. Thus $S\cap T_k\neq\emptyset$ for every $S\in\mathcal{L}_2$.

It follows that $\mathcal{F}:=\mathcal{L}_2\cup\{T_k\}$ is intersecting and strictly larger than the star $\mathcal{L}_2$ (since $T_k\notin\mathcal{L}_2$). Hence, $P_n^*$ is not $(n-k)$‑EKR for $n\ge 3k+2$, $k\ge 2$.
\end{proof}

\medskip

We next treat the case $k=1$.

\begin{theorem}\label{notEKR2}
For $n\ge 6$, the pendant path graph $P_n^*$ is \emph{not} $(n-1)$‑EKR.
\end{theorem}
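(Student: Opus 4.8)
The plan is to reuse the template of Theorem~\ref{notEKR1}: begin with the maximum $(n-1)$-star $\mathcal{L}_2:=\mathcal{I}^{(n-1)}_{p_2}(P_n^*)$ and enlarge it by a single independent $(n-1)$-set that lies outside $\mathcal{L}_2$ yet meets every member of it. The obstruction special to $k=1$ is that the ``all pendants'' candidate $\{p_2,p_3,\dots,p_n\}$ already contains $p_2$, hence lies in $\mathcal{L}_2$; the remedy is to trade $p_2$ for its base neighbour $x_2$ and take
\[
S\;:=\;\{x_2\}\cup\{p_3,p_4,\dots,p_n\},
\]
the independent $(n-1)$-set whose omitted base--pendant pair is at index $1$. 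A one-line check shows $S\in\mathcal{I}^{(n-1)}(P_n^*)$ (the neighbours of $x_2$ are $x_1,x_3,p_2$, none of which lies in $S$, and distinct pendants are non-adjacent) and $p_2\notin S$, so $S\notin\mathcal{L}_2$; thus $\mathcal{F}:=\mathcal{L}_2\cup\{S\}$ is strictly larger than every $(n-1)$-star once we know it is intersecting.

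Next I would verify that $\mathcal{F}$ is intersecting, i.e.\ that each $S'\in\mathcal{L}_2$ meets $S$. Suppose not, so $S'\cap S=\emptyset$; then $S'$ avoids $x_2$ and every pendant $p_3,\dots,p_n$. Since $S'$ is an independent $(n-1)$-set and $V(P_n^*)$ is the disjoint union of the $n$ edges $\{x_i,p_i\}$, the set $S'$ contains at most one vertex of each such edge, hence omits exactly one base--pendant pair, at some index $m'$, with $m'\neq 2$ because $p_2\in S'$. For each index $i\in\{3,\dots,n\}\setminus\{m'\}$ the set $S'$ is then forced to contain the base vertex $x_i$: it cannot use $p_i$ (by disjointness from $S$) and the index is not omitted. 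Hence $S'$ contains the run $x_3,x_4,\dots,x_n$ with at most one entry deleted.

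The crux — and the only use of the hypothesis — is the elementary remark that for $n\ge 6$ this set must contain two consecutive indices: the disjoint pairs $\{3,4\}$ and $\{5,6\}$ both lie in $\{3,\dots,n\}$, and deleting the single index $m'$ kills at most one of them (and none if $m'=1$), so $S'$ contains adjacent base vertices $x_i,x_{i+1}$, contradicting independence. This threshold is sharp: for $n=5$ only $\{3,4\}$ is available and $m'=4$ leaves $\{x_3,x_5\}$, which is independent, so the argument (rightly) breaks down below $n=6$. With $\mathcal{F}$ intersecting, $|\mathcal{F}|=|\mathcal{L}_2|+1$ exceeds the size of the largest $(n-1)$-star, so $P_n^*$ is not $(n-1)$-EKR. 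The main obstacle is precisely the case analysis in this last step: one must confirm that for \emph{every} admissible omitted index $m'$ — including the extreme cases $m'=1$ and $m'$ near $n$ — the forced block of base vertices is long enough to contain an edge, and it is exactly this uniformity that pins the bound at $n\ge 6$. (If a larger augmenting family is desired, the mirror set $\{x_{n-1}\}\cup\{p_1,\dots,p_{n-2}\}$ can be adjoined at the same time, since it intersects both $S$ and all of $\mathcal{L}_2$; but the single set $S$ already suffices.)
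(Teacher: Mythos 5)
Your proof is correct and follows essentially the same route as the paper: augment the maximum star $\mathcal{L}_2=\mathcal{I}^{(n-1)}_{p_2}(P_n^*)$ by a single independent $(n-1)$-set avoiding $p_2$ and check it meets every member of the star. The only differences are cosmetic — the paper uses the witness $\{p_1,p_3,\dots,p_n\}$ and rules out a disjoint $S'\in\mathcal{L}_2$ via the bound $\alpha(P_n)=\lceil n/2\rceil$, whereas you use $\{x_2,p_3,\dots,p_n\}$ and derive a forced pair of adjacent base vertices; both verifications pin the same threshold $n\ge 6$.
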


\begin{proof}
Let $\mathcal{L}_2:=\mathcal{I}^{(n-1)}_{p_2}(P_n^*)$ be a largest $(n-1)$‑star by \cite[Thm.~5]{DeSilvaDDH2023}. Define the single $(n-1)$‑set
\[
T'\ :=\ \{p_1,p_3,p_4,\dots,p_n\}\ \in\ \mathcal{I}^{(n-1)}(P_n^*).
\]
We claim that $S\cap T'\neq\emptyset$ for every $S\in\mathcal{L}_2$ when $n\ge 6$. If $S\cap T'=\emptyset$, then $S$ contains $p_2$ and \emph{no other} pendant, so all remaining $n-2$ vertices of $S$ would have to lie in the base set $\{x_1,\dots,x_n\}$. Since a path on $n$ vertices has independence number $\lceil n/2\rceil$, we have
\[
|S|\ \le\ 1+\Big\lceil\frac{n}{2}\Big\rceil\ <\ n-1,
\]
for all $n \ge 6$, a contradiction. Therefore $\mathcal{L}_2\cup\{T'\}$ is intersecting and strictly larger than the star, and $P_n^*$ is not $(n-1)$‑EKR for $n\ge 6$.
\end{proof}

\medskip

Finally we address the case $k=0$ (i.e., $r=n$). 

\begin{theorem}\label{notEKR3}
For all $n\ge 4$, the pendant path graph $P_n^*$ is \emph{not} $n$‑EKR.
\end{theorem}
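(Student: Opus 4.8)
The plan is to mimic the constructions of Theorems~\ref{notEKR1} and~\ref{notEKR2}, but now with $r=n$, where the independence constraint is most severe. First I would recall that by \cite[Theorem~5]{DeSilvaDDH2023} a largest $n$-star in $P_n^*$ is $\mathcal{L}_2:=\mathcal{I}^{(n)}_{p_2}(P_n^*)$, and I would note that an independent $n$-set of $P_n^*$ (which has $2n$ vertices) must pick, from each base--pendant pair $\{x_i,p_i\}$, one or the other, \emph{and} must avoid choosing two adjacent base vertices $x_i,x_{i+1}$. So independent $n$-sets correspond exactly to subsets $B\subseteq[n]$ of chosen base indices with no two consecutive (the rest being pendants), and $\mathcal{L}_2$ consists of those sets in which $2\notin B$, i.e.\ $p_2$ is chosen.

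The key step is to exhibit a single independent $n$-set $T\notin\mathcal{L}_2$ that meets every $S\in\mathcal{L}_2$, so that $\mathcal{L}_2\cup\{T\}$ is intersecting and strictly larger. A natural candidate is the set corresponding to base index set $B=\{2\}$, namely $T:=\{x_2\}\cup\{p_i: i\neq 2\}=\{p_1,x_2,p_3,p_4,\dots,p_n\}$; this is independent (no two base vertices chosen) and lies outside $\mathcal{L}_2$ since $p_2\notin T$. To check $T\cap S\neq\emptyset$ for all $S\in\mathcal{L}_2$: such an $S$ contains $p_2$ and, for each $i\neq2$, either $x_i$ or $p_i$. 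If $S$ also contains some $p_i$ with $i\neq2$, that $p_i$ lies in $T$ and we are done. The only way to avoid this is $S\supseteq\{x_i:i\neq2\}$, but $x_1$ and $x_3$ are then both in $S$ while $x_1x_2x_3$\dots wait — actually $x_1$ and $x_3$ are not adjacent, so I must be more careful: the obstruction is that $x_3$ and $x_4$ would both be in $S$, which is forbidden since $x_3x_4\in E(P_n)$. Hence for $n\ge4$ no such $S$ exists (one needs at least one pair of consecutive indices among $\{1,3,4,5,\dots,n\}$, e.g.\ $3,4$), so every $S\in\mathcal{L}_2$ shares a pendant $p_i$ ($i\neq2$) with $T$. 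This gives the result for all $n\ge4$.

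The main obstacle I anticipate is precisely this last verification: one must argue that $\{x_i : i\in[n]\setminus\{2\}\}$ fails to be independent in $P_n$ for $n\ge4$, which hinges on the presence of two consecutive indices in $[n]\setminus\{2\}$ — true exactly once $n\ge4$ (the indices $3,4$ appear), and the reason the bound is $n\ge4$ rather than smaller. I would also double-check the edge cases $n=4,5$ by hand, since the constructions in the $k\ge1$ theorems needed larger $n$ and I want to be sure the $r=n$ construction genuinely works down to $n=4$; if it does not, I would fall back to choosing $T$ from base set $B=\{n-1\}$ or another singleton and redo the same intersection check. Finally I would remark that this recovers \cite[Lemma~9]{DeSilvaDDH2023} via a cleaner explicit witness.

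\begin{proof}
By \cite[Theorem~5]{DeSilvaDDH2023}, a largest $n$-star is $\mathcal{L}_2:=\mathcal{I}^{(n)}_{p_2}(P_n^*)$. Since $P_n^*$ has $2n$ vertices and each base--pendant pair $\{x_i,p_i\}$ forms an edge, any independent $n$-set must contain exactly one vertex from each pair; writing $B\subseteq[n]$ for the indices at which the base vertex is chosen, independence of $\{x_i:i\in B\}$ in $P_n$ forces $B$ to contain no two consecutive integers. Thus $\mathcal{L}_2$ is exactly the family of such sets with $2\notin B$.

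Define the independent $n$-set
\[
T\ :=\ \{x_2\}\cup\{p_i:\ i\in[n],\ i\neq 2\}\ =\ \{p_1,x_2,p_3,p_4,\dots,p_n\},
\]
which is independent (its only base vertex is $x_2$) and satisfies $T\notin\mathcal{L}_2$ since $p_2\notin T$. We claim $S\cap T\neq\emptyset$ for every $S\in\mathcal{L}_2$. Fix such an $S$, corresponding to a base-index set $B$ with $2\notin B$ and no two consecutive elements. If $B\neq[n]\setminus\{2\}$, then some index $i\neq 2$ has $i\notin B$, so $p_i\in S$ and $p_i\in T$, giving $S\cap T\neq\emptyset$. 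It remains to rule out $B=[n]\setminus\{2\}$: for $n\ge 4$ the set $[n]\setminus\{2\}$ contains the consecutive pair $\{3,4\}$, so $\{x_i:i\in B\}$ is not independent, a contradiction. Hence every $S\in\mathcal{L}_2$ meets $T$.

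Therefore $\mathcal{F}:=\mathcal{L}_2\cup\{T\}$ is an intersecting family of independent $n$-sets with $|\mathcal{F}|=|\mathcal{L}_2|+1$ strictly exceeding the size of the largest $n$-star. Consequently $P_n^*$ is not $n$-EKR for any $n\ge 4$, recovering \cite[Lemma~9]{DeSilvaDDH2023}.
\end{proof}
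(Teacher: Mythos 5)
Your proof is correct, and it takes a genuinely different route from the paper's. You augment the largest star $\mathcal{L}_2$ by a single witness $T=\{x_2\}\cup\{p_i: i\neq 2\}$ (base-index set $B=\{2\}$) and verify, via the bijection between independent $n$-sets and subsets $B\subseteq[n]$ with no two consecutive elements, that the only candidate for a member of $\mathcal{L}_2$ disjoint from $T$ would correspond to $B=[n]\setminus\{2\}$, which is illegal for $n\ge 4$ because of the consecutive pair $\{3,4\}$; this exactly parallels the add-one-set constructions of Theorems~\ref{notEKR1} and~\ref{notEKR2} and correctly isolates why the threshold is $n\ge 4$. The paper instead works from the top down: it identifies the unique disjoint pair $\{A,A^{c}\}$ inside $\mathcal{I}^{(n)}(P_n^*)$ (the two alternating-parity sets), deletes $A^{c}$ to obtain the intersecting family $\mathcal{I}^{(n)}(P_n^*)\setminus\{A^{c}\}$ of size $|\mathcal{I}^{(n)}(P_n^*)|-1$, and then notes the star omits at least two sets. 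Your approach buys uniformity with the $k\ge 1$ cases and a shorter verification; the paper's buys a much larger (indeed maximum possible) intersecting family and the structural fact that $\{A,A^{c}\}$ is the only disjoint pair. Both establish the claim for all $n\ge 4$.
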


\begin{proof}
Define
\[
A\ :=\ \{x_i:\ i\ \text{odd}\}\ \cup\ \{p_i:\ i\ \text{even}\}\ \in\ \mathcal{I}^{(n)}(P_n^*).
\]
Then $A$ is independent: among the base vertices we select only $x_i$ with odd $i$, which are pairwise nonadjacent in the path $x_1x_2\cdots x_n$, and each $p_i$ is adjacent only to $x_i$. Let $A^{c}$ be the set obtained by swapping the choice in each pair $\{x_i,p_i\}$; explicitly,
\[
A^{c}\ =\ \{x_i:\ i\ \text{even}\}\ \cup\ \{p_i:\ i\ \text{odd}\}\ \in\ \mathcal{I}^{(n)}(P_n^*).
\]
Thus $A$ and $A^{c}$ are two \emph{disjoint} independent $n$-sets.

We claim that if $S,T\in \mathcal{I}^{(n)}(P_n^*)$ are disjoint, then $T$ is the \emph{pairwise complement} of $S$ (i.e., for each $i$, $S$ contains exactly one of $\{x_i,p_i\}$ and $T$ contains the other). Indeed, disjointness and $|S|=|T|=n$ force $S\cup T$ to contain exactly one vertex from each pair $\{x_i,p_i\}$ twice, hence one in $S$ and the other in $T$. In particular, both $S$ and $S^{c}$ must be independent. On a path, this happens if and only if the set of indices where $S$ chooses base vertices is an \emph{alternating} (parity) subset (so that no two consecutive base vertices are chosen in either $S$ or $S^{c}$). Consequently, up to swapping parity, the only disjoint pair within $\mathcal{I}^{(n)}(P_n^*)$ is $\{A, A^{c}\}$.

Consider now the family
\[
\mathcal{F}\ :=\ \mathcal{I}^{(n)}(P_n^*)\ \setminus\ \{A^{c}\}.
\]
By the preceding paragraph, $\mathcal{F}$ is intersecting: any two disjoint members of $\mathcal{I}^{(n)}(P_n^*)$ would have to be $A$ and $A^{c}$, but $A^{c}\notin \mathcal{F}$.

Finally, compare $|\mathcal{F}|$ with the size of an $n$‑star. With $\mathcal{L}_2:=\mathcal{I}^{(n)}_{p_2}(P_n^*)$, note that $\mathcal{L}_2$ contains only $n$‑sets that include $p_2$, whereas there are \emph{at least two} independent $n$‑sets that avoid $p_2$, namely $A^{c}$ and
\[
C\ :=\ \{x_2,x_4\}\ \cup\ \{p_i:\ i\notin\{2,4\}\}\ \in\ \mathcal{I}^{(n)}(P_n^*)\qquad(n\ge 4),
\]
since $x_2$ and $x_4$ are nonadjacent and each $p_i$ is adjacent only to $x_i$. Hence
\[
|\mathcal{L}_2|\ \le\ \bigl|\mathcal{I}^{(n)}(P_n^*)\bigr|\ -\ 2
\quad\text{while}\quad
|\mathcal{F}|\ =\ \bigl|\mathcal{I}^{(n)}(P_n^*)\bigr|\ -\ 1,
\]
so $|\mathcal{F}|>|\mathcal{L}_2|$. Therefore the star $\mathcal{L}_2$ is not maximum among intersecting families of independent $n$‑sets, and $P_n^*$ is not $n$‑EKR for all $n\ge 4$.
\end{proof}

\begin{remark}[Largest stars on $P_n^*$]
For completeness we record that, for each $1\le r\le n$, the largest $r$‑stars of $P_n^*$ are centered at $p_2$ and $p_{n-1}$; see \cite[Theorem 5]{DeSilvaDDH2023}. This validates the choice of the star $\mathcal{L}_2$ as the correct benchmark in Theorems~\ref{notEKR1}, \ref{notEKR2}, and \ref{notEKR3}.
\end{remark}


\section{Future Directions} \label{sec6}

This paper established the Erdős--Ko--Rado (EKR) property for pendant constructions over complete graphs and their generalizations, and demonstrated its failure in certain cases for pendant path graphs. Several natural directions for future research arise from these results.\\

\noindent One compelling question is whether the pendant path graph $P_n^*$ satisfies the EKR property exactly for all $r \le \mu(P_n^*)/2 = n/2$ and fails as soon as $r > n/2$. In fact, we show that whenever $k \ge 2$ and $n \ge 3k+2$, one can build an intersecting family larger than the star at $p_2$, proving that $P_n^*$ is not $(n-k)$-EKR (including the cases $r=n$ for $n\ge4$, $r=n-1$ for $n\ge6$, and more generally $r=n-k\ge\lceil(2n+2)/3\rceil$). These explicit constructions refine and extend De Silva’s alternate counterexample and confirm that, while EKR holds for all $r \le n/2$, it breaks down immediately above that half-independence threshold—fully aligning with the Holroyd–Talbot conjecture.\\

\noindent Beyond paths, it is natural to ask whether other pendant constructions satisfy similar extremal properties. One such example is the \emph{pendant cycle graph} $C_n^*$, formed by attaching a pendant vertex to each vertex of the cycle $C_n$. Determining for which values of $r$ the graph $C_n^*$ is $r$-EKR remains an open question and may require new ideas beyond those used for pendant paths and cliques.\\

\noindent Another direction involves studying powers of paths and cycles. The $k$-th power of a graph $G$, denoted $G^k$, connects each vertex to all others within distance $k$ in the original graph. Holroyd, Spencer, and Talbot~\cite{Holroyd2005} proved that for certain values of $r$, the $k$-th powers of paths and cycles are $r$-EKR, particularly when $r \leq \alpha(G)$, where $\alpha(G)$ is the independence number. However, a complete understanding of which values of $r$ yield extremal star families in these powers is still lacking.\\

\noindent While these families share structural similarities, their EKR behavior appears to diverge in subtle but significant ways. The challenge ahead lies in identifying the precise graph features—such as symmetry, neighborhood overlap, and independence number—that govern whether stars are extremal. Developing techniques that can handle these nuances across broader classes of graphs remains a central problem in the study of EKR-type problems for families of independent sets in graphs.

\section*{Acknowledgments}
The authors are deeply grateful to Dr.Vikram Kamat for his generous guidance and many constructive comments throughout this project. His suggestions, especially on the use of compression-style arguments and the organization of our pendant constructions, substantially improved the clarity and scope of our results. We also thank him for several careful readings of earlier drafts and for pointing out refinements that strengthened a number of proofs.


\end{document}